\newtheorem{theorem}{Theorem}
\newtheorem{corollary}[theorem]{Corollary}
\newtheorem{example}[theorem]{Example}
\newtheorem{lemma}[theorem]{Lemma}
\newenvironment{proof}[1][Proof]{\noindent\textbf{#1.} }{\ \rule{0.5em}{0.5em}}
\begin{document}

\title{Spectral results for the dominating induced matching problem}

\author{\textbf{Domingos M. Cardoso}, \textbf{Enide A. Martins} \\
%EndAName
{\small CIDMA-Center for Research and Development in Mathematics and Applications,}\\
{\small Department of Mathematics, University of Aveiro, 3810-193 Aveiro, Portugal}\\
{\small \texttt{dcardoso@ua.pt} \& \texttt{enide@ua.pt}} \and \textbf{Luis Medina}\\
%EndAName
{\small Departamento de Matem\'aticas, Universidad de Antofagasta, Antofagasta, Chile.}\\
{\small \texttt{luis.medina@uantof.cl}} \and \textbf{Oscar Rojo}\\
%EndAName
{\small Departamento de Matem\'{a}ticas, Universidad Cat\'{o}lica del Norte, Antofagasta, Chile. }\\
{\small \texttt{orojo@ucn.cl}}}
%{\small mrobbiano@ucn.cl \& sanmarti@ucn.cl}}

\maketitle

\begin{abstract}
A matching $M$ is a dominating induced matching of a graph, if every edge of the graph is either in $M$ or has
a common end-vertex with exactly one edge in $M$. The concept of complete dominating induced matching is introduced
as graphs where the vertex set can be partitioned into two subsets, one of them inducing an $1$-regular graph and
the other defining an independent set and such that all the remaining edges connect each vertex of one set to each vertex
of the other. The principal eigenvectors of the adjacency, Laplacian and signless Laplacian matrices of graphs with
complete dominating induced matchings are characterized and, therefore, the polynomial time recognition of graphs
with complete dominating induced matchings is stated. The adjacency, Laplacian and signless Laplacian spectrum of
graphs with complete dominating induced  matchings are characterized. Finally, several upper and lower bounds
on the cardinality of a dominating induced matching obtained from the eigenvalues of the adjacency, Laplacian and signless
Laplacian matrices are deduced and examples for which some of these bounds are tight are presented.
\end{abstract}

\bigskip

\textbf{Keywords}: graph spectra; dominating induced matching.

\bigskip

\textbf{MSC2000}: 05C50; 15A18

\baselineskip=0.30in

\section{Introduction}

Throughout this paper we consider undirected simple graphs $G$ of order $n>1$ with a vertex set $V(G)$ and edge
set $E(G)$. An element of  $E(G)$, which has the vertices $i$ and $j$ as end-vertices, is denoted by $ij$.
A matching $M$ of a graph $G$ is an edge subset such that there are no two edges in $M$ with a common end-vertex.
If $v \in V(G),$ then we denote the neighborhood of $v$ by $N_G(v)$, that is, $N_{G}(v) = \{w: vw \in E(G)\}.$
The number of neighbors of $v \in V(G)$ is denoted by $d(v)$ and called, as usually, degree of $v.$
If $G$ is such that $\forall v \in V(G) \;\; d(v)=p$ then we say that $G$ is $p$-regular. A graph induced by
a vertex subset $S \subset V(G)$ which is denoted by $G[S]$ is a subgraph of $G$ with vertex set $S$ and edge set
$E(G[S])=\{ij: i,j \in S \wedge ij \in E(G)\}$. An independent set of a graph $G$ is a vertex subset $S \subseteq V(G)$
which induces a $0$-regular graph $G[S]$.
A matching $M$ of $G$ is a \textit{dominating induced matching} (DIM) of $G$ if every edge of $G$ is either in
$M$ or has a common end-vertex with exactly one edge in $M$. A DIM is also called an
\textit{efficient edge domination set}. Observe that if $M$ is a DIM of $G$, then there
is a partition of $V(G)$ into two disjoint subsets $V(M)$ and $S$, where $S$ is an independent set. Conversely,
if there exists a graph $G$ such that its vertex set $V(G)$ can be partitioned into two vertex subsets $V_1$ and $V_2$,
where $V_1$ induces a matching and $V_2$ is an independent set, then the subset $M \subset E(G)$ of edges with both
ends in $V_1$ is a DIM. From now on, if $M$ is a DIM of $G$ then $V(G) \setminus V(M)$ will be denoted by $S$.

Not all graphs have a DIM, for instance the cycle with four vertices $C_4$ has no DIM. The \textit{DIM problem}
asks wether a given graph has a dominating induced matching. Dominating induced matchings have been studied for
instance in \cite{BHN10, BM11, BLR12, CCDS08, CL09, CKL11, K09}.

Now we introduce the concept of \textit{complete dominating induced matching}. Given a graph $G$, a DIM
$M \subset E(G)$ is complete if $E(G)=M \cup \{xy: x \in V(M), y \in V(G) \setminus V(M)\}$, that is, each vertex
of $V(M)$ is connected by an edge to each vertex of $V(G)\setminus V(M)$ which is not empty. As will see, the graphs
with complete dominating induced matchings are graphs with special spectral properties.

Throughout the paper we deal with adjacency, Laplacian and signless Laplacian matrices of graphs. The adjacency matrix of
a graph $G$ of order $n$ is the $n\times n$ symmetric matrix $A\left(G\right) =\left( a_{ij}\right)$ where $a_{ij}=1$ if $ij \in E(G)$
and $a_{ij}=0$ otherwise. The Laplacian (signless Laplacian) matrix of $G$ is the matrix $L(G)=D(G)-A(G)$ ($Q(G)=D(G)+A(G)$),
where $D(G)$ is the $n \times n$ diagonal matrix of vertex degrees of $G$.  The matrices $A\left(G\right)$, $L\left(G\right)$
and $Q\left(G\right)$ are all real and symmetric. From Ger\v{s}gorin's theorem, it follows that the eigenvalues of
$L\left(G\right)$ and $Q\left( G\right)$ are nonnegative real numbers. The spectrum of a matrix $C$ is denoted by $\sigma(C)$
and, in the particular cases of $A(G)$, $L(G)$ and $Q(G)$, their spectra are denoted by $\sigma _{A}(G)$, $\sigma _{L}(G)$ and
$\sigma _{Q}(G)$,  respectively. In this text, $\sigma _A(G) = \{\lambda_1^{[i_1]}, \ldots, \lambda_p^{[i_p]}\},$
$\sigma _L(G) = \{\mu_1^{[j_1]}, \ldots, \mu_q^{[j_q]}\}$ and $\sigma _{Q}(G)  = \{q_1^{[k_1]}, \ldots, q_r^{[k_r]}\}$)
means that $\lambda_s$, $\mu_s$ or $q_s$ is an adjacency, Laplacian or signless Laplacian eigenvalue with multiplicity
$i_s$, $j_s$ or $k_s$. As usually, we denote the eigenvalues of $A\left(G\right),$ $L\left(G\right)$ and $Q\left(G\right)$)
in non increasing order, that is, $\lambda_1(G) \ge \cdots \ge \lambda_n(G),$  $\mu_1(G) \ge \cdots \ge \mu_n(G)$
and $q_1(G) \ge \cdots \ge q_n(G)$. Considering a graph $G$, the largest eigenvalue of the adjacency $A(G)$, Laplacian $L(G)$
and signless Laplacian $Q(G)$ matrix of $G$ will be denoted, respectively, by $\rho(A(G))$, $\rho(L(G))$ and $\rho(Q(G))$.
The eigenvectors associated to $\rho(A(G))$, $\rho(L(G))$ or $\rho(Q(G))$ are called the principal eigenvectors of
$A(G)$, $L(G)$ or $Q(G)$, respectively. As usually, $\rho(A(G))$ is called the index of $G$ and it is also denoted $\rho(G)$.
For an arbitrary square matrix $C$ the $i$-th eigenvalue and its trace are denoted by $\lambda_i(C)$ and $\text{tr}(C)$,
respectively.

Throughout this paper, $\mathbf{j}_k$ denotes the all one vector with $k$ entries.

\section{Adjacency, Laplacian and signless Laplacian spectra of graphs with complete dominating induced matchings}

Given a graph $H$ of order $n$ with a complete dominating induced matching $M$ such that $|M|=m$, we may define $H$ using
the join operation as follows. Let $H_r=mK_2,$ with $r=2m$ and $H_s=G[V(G) \setminus V(M)]$, with $s=n-r$, a null graph
of order $s$ (that is, a graph formed by $s$ isolated vertices). Then $H=H_{r}\vee H_{s}$, that is, $H$ is the join of the
graphs $H_r$ and $H_s$.

%Considering the complete bipartite graph $K_{r,s}$, using the notation of \cite{abreu_et_al2012}, $H_{r}\vee H_{s} = (K_{r,s} + H_r) + H_s$.

Consider the two above vertex disjoint graphs $H_r$ and $H_s$ of orders $r$ and $s$, respectively. We label the vertices of
$H=H_r \vee H_s$, with the labels $1,2,\ldots ,r$ for the vertices of $H_r$ and with the labels $r+1, \ldots, r+s$, for the
vertices of $H_s$. Let $C\left(H\right)$ be a matrix on $H=H_r \vee H_s$. If $C\left(H\right)=L\left(H\right)$ or
$C\left(H\right)=A\left(H\right)$ or $C\left(H\right)=Q\left(H\right)$ then, using the above mentioned labeling for the vertices
of $H,$ we obtain
\begin{equation}\label{general_matrix}
C\left(H\right) = \left[\begin{array}{cc}
                         C_{1}                                  & \delta\mathbf{j}_{r}\mathbf{j}_{s}^{T} \\
                         \delta\mathbf{j}_{s}^{T}\mathbf{j}_{r} & C_2
                        \end{array}%
                   \right]
\end{equation}%
where $C_{1}=A(H_r)$ and $C_{2}=A(H_s)$ or $C_{1}=L(H_r)+sI_r$ and $C_{2}=L(H_s)+rI_s$ or $C_{1}=Q(H_r)+sI_r$ and $C_{2}=Q(H_s)+rI_s$,
when $C(H)$ is the adjacency, Laplacian or signless Laplacian matrix of $H$, respectively. In any case, in \eqref{general_matrix}
we have $\delta \ne 0$. Notice that
\begin{equation*}
C_{1}\mathbf{j}_{r}=\gamma _{1}\mathbf{j}_{r}\text{ and } C_{2}\mathbf{j}_{s}=\gamma_{2}\mathbf{j}_{s},
\end{equation*}
with $\gamma_1=1$ and $\gamma_2=0$ (when $C(H)$ is the adjacency matrix) or $\gamma_1=-s$ and $\gamma_2=-r$ (when $C(H)$ is the Laplacian
matrix) or $\gamma_1=2+s$ and $\gamma_2=r$ (when $C(H)$ is the signless Laplacian matrix).

Let us consider the matrix
\begin{equation}\label{matrix_B}
B = \left[\begin{array}{cc}
                 \gamma_1         & \delta \sqrt{rs}\\
                 \delta \sqrt{rs} & \gamma_2        \\
          \end{array}\right]
\end{equation}
and its eigenvalues
\begin{eqnarray}
\theta_1 &=& \frac{1}{2}\left(\gamma_1 + \gamma_2 + \sqrt{(\gamma_1 - \gamma_2)^2 + 4\delta^2rs}\right) \label{marca1}\\
\theta_2 &=& \frac{1}{2}\left(\gamma_1 + \gamma_2 - \sqrt{(\gamma_1 - \gamma_2)^2 + 4\delta^2rs}\right).\label{marca2}
\end{eqnarray}

\begin{lemma}\label{lemma1}
If $B$ is the matrix in \eqref{matrix_B} and $\theta \in \sigma(B)$ has an associated eigenvector
$\left[\begin{array}{c}
              1 \\
              x \\
       \end{array}\right]$, then $\theta \in \sigma(C(H))$ and $\left[\begin{array}{r}
                                                                      \textbf{j}_r \\
                                                                \sqrt{\frac{r}{s}}x\textbf{j}_s \\
                                                                      \end{array}\right]$ is an
associated eigenvector.
\end{lemma}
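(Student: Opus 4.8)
The plan is to verify directly that the proposed vector is an eigenvector of $C(H)$ for the eigenvalue $\theta$, by exploiting the block structure in \eqref{general_matrix} together with the fact that $\mathbf{j}_r$ and $\mathbf{j}_s$ are eigenvectors of $C_1$ and $C_2$. Set $v=\left[\begin{array}{c}\mathbf{j}_r\\ \sqrt{r/s}\,x\,\mathbf{j}_s\end{array}\right]$ and compute $C(H)v$ block by block, so that the desired identity $C(H)v=\theta v$ splits into two vector relations, one of length $r$ and one of length $s$. The goal is to reduce each of these to the corresponding scalar row of the hypothesis $B\left[\begin{array}{c}1\\ x\end{array}\right]=\theta\left[\begin{array}{c}1\\ x\end{array}\right]$, that is, to $\gamma_1+\delta\sqrt{rs}\,x=\theta$ and $\delta\sqrt{rs}+\gamma_2 x=\theta x$.

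For the top block I would use $C_1\mathbf{j}_r=\gamma_1\mathbf{j}_r$ and $\mathbf{j}_s^T\mathbf{j}_s=s$ to obtain $C_1\mathbf{j}_r+\delta\,\mathbf{j}_r\mathbf{j}_s^T\!\left(\sqrt{r/s}\,x\,\mathbf{j}_s\right)=\left(\gamma_1+\delta\sqrt{rs}\,x\right)\mathbf{j}_r$, where the normalization has been chosen precisely so that $\sqrt{r/s}\cdot s=\sqrt{rs}$. The resulting coefficient $\gamma_1+\delta\sqrt{rs}\,x$ is the first entry of $B\left[\begin{array}{c}1\\ x\end{array}\right]$, hence equals $\theta$, matching the top block of $\theta v$. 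For the bottom block I would use $C_2\mathbf{j}_s=\gamma_2\mathbf{j}_s$ and $\mathbf{j}_r^T\mathbf{j}_r=r$ to get $\delta\,\mathbf{j}_s\!\left(\mathbf{j}_r^T\mathbf{j}_r\right)+C_2\!\left(\sqrt{r/s}\,x\,\mathbf{j}_s\right)=\left(\delta r+\gamma_2\sqrt{r/s}\,x\right)\mathbf{j}_s$, and I must check that this equals $\theta\sqrt{r/s}\,x\,\mathbf{j}_s$.

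The one place requiring a little care — and the only step that is not immediate — is confirming this last identity, since here the constant term $\delta r$ must be reconciled with the scaled unknown $x$. The second row of the hypothesis gives $(\theta-\gamma_2)x=\delta\sqrt{rs}$; multiplying through by $\sqrt{r/s}$ turns the right-hand side into $\delta\sqrt{rs}\cdot\sqrt{r/s}=\delta r$, which is exactly the constant appearing in the bottom block. Hence $\delta r+\gamma_2\sqrt{r/s}\,x=\theta\sqrt{r/s}\,x$, the bottom block of $\theta v$ is recovered, and $C(H)v=\theta v$ follows. Since the prescribed values of $\gamma_1,\gamma_2$ (together with $\delta\neq 0$) account uniformly for the adjacency, Laplacian and signless Laplacian cases, no case distinction is needed and the argument is complete.
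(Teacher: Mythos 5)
Your proof is correct and follows essentially the same route as the paper's: both verify $C(H)v=\theta v$ by the block computation, using $C_1\mathbf{j}_r=\gamma_1\mathbf{j}_r$, $C_2\mathbf{j}_s=\gamma_2\mathbf{j}_s$ and the two scalar rows of $B\left[\begin{smallmatrix}1\\ x\end{smallmatrix}\right]=\theta\left[\begin{smallmatrix}1\\ x\end{smallmatrix}\right]$. Your write-up is in fact slightly more explicit than the paper's, which compresses the final identification into the remark that the equality follows from the eigenvalue equation for $B$.
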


\begin{proof}
Since $\theta \in \sigma(B)$ and $\left[\begin{array}{c}
                                               1 \\
                                               x \\
                                        \end{array}\right]$ is an associated eigenvector, then
\begin{equation}
\left[\begin{array}{cc}
\gamma_{1}       & \delta \sqrt{rs} \\
\delta \sqrt{rs} & \gamma_{2}
\end{array}\right] \left[\begin{array}{c}
                                1 \\
                                x%
                         \end{array}%
                         \right] = \theta \left[\begin{array}{c}
                                                       1 \\
                                                       x
                                                \end{array}\right] .\label{xeq1}
\end{equation}
On the other hand,
\begin{eqnarray}
\left[\begin{array}{cc}
             C_{1}                            & \delta \mathbf{j}_{r}\mathbf{j}_{s}^{T} \\
      \delta \mathbf{j}_{s}\mathbf{j}_{r}^{T} & C_{2}
\end{array}\right] \left[\begin{array}{c}
                         \mathbf{j}_{r} \\
                   \sqrt{\frac{r}{s}}x\mathbf{j}_{s}%
                         \end{array}\right]
         &=& \left[\begin{array}{c}
             \gamma_{1}\mathbf{j}_{r} + \delta \sqrt{rs}x\mathbf{j}_{r} \\
             \delta r\mathbf{j}_{s}   + \sqrt{\frac{r}{s}}x\gamma_{2}\mathbf{j}_{s}
             \end{array}\right] \nonumber\\
         &=& \left[\begin{array}{c}
                   \left(\gamma_{1}+\delta\sqrt{rs}x\right)\mathbf{j}_{r} \\
                   \left(\delta \sqrt{rs}+x \gamma_{2}\right)\sqrt{\frac{r}{s}}\mathbf{j}_{s}
                   \end{array}\right] \nonumber \\
         &=&\theta \left[\begin{array}{c}
                         \mathbf{j}_{r} \\
                         \sqrt{\frac{r}{s}}x\mathbf{j}_{s}%
                         \end{array}\right]. \label{marca3}
\end{eqnarray}
Notice that the equality \eqref{marca3} is obtained as a consequence of \eqref{xeq1}. Hence,
$\theta \in \sigma \left( C\left( H \right) \right)$ and $\left[\begin{array}{c}
                                                          \mathbf{j}_{r}\\
                                                          \sqrt{\frac{r}{s}}x\mathbf{j}_{s}%
                                                                \end{array}\right]$ is an
associated eigenvector.
\end{proof}

The next theorem allows the polynomial time recognition of graphs with complete DIMs.

\begin{theorem}\label{cor_dim_eigenvectors}
Let $H$ be a graph of order $n$ with a complete DIM, $M \subset E(H)$ such that $|M|=m$. Then, the spectral radius of
the adjacency, Laplacian and signless Laplacian matrix of $H$, as well as the corresponding principal eigenvectors,
are the following:
\begin{enumerate}
\item $\rho(A(H)) = \frac{1 + \sqrt{1 + 8m(n-2m)}}{2}$ and the corresponding eigenvector is
      \begin{equation*}
      \mathbf{u} = \left[\begin{array}{c}
                         \textbf{j}_{2m}\\
                         \frac{\rho(A(H)) - 1}{n-2m}\textbf{j}_{n-2m}\\
                         \end{array}\right].
      \end{equation*}\label{cor_adj}
\item $\rho(L(H)) = n$ and the corresponding eigenvector is
      \begin{equation*}
      \mathbf{v} = \left[\begin{array}{c}
                         \textbf{j}_{2m}\\
                         -\frac{2m}{n-2m}\textbf{j}_{n-2m}\\
                         \end{array}\right].
      \end{equation*}\label{cor_lap}
\item $\rho(Q(H))=\frac{2+n+\sqrt{(2+n)^2-16m}}{2}$ and the corresponding eigenvector is
      \begin{equation*}
      \mathbf{w} = \left[\begin{array}{c}
                       \mathbf{j}_{2m} \\
                       \frac{\rho(Q(H))-(n-2m+2)}{n-2m}\mathbf{j}_{n-2m}%
                       \end{array}\right].
      \end{equation*}\label{cor_slap}
\end{enumerate}
\end{theorem}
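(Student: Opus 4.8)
The plan is to read the three spectral radii and their eigenvectors straight off the $2\times 2$ reduction already prepared in Lemma~\ref{lemma1}. Write $r=2m$ and $s=n-2m$, so that $H=H_r\vee H_s$ with $H_r=mK_2$ and $H_s$ a null graph, and $C(H)$ has the block form \eqref{general_matrix}. In each case the relevant triple $(\gamma_1,\gamma_2,\delta)$ fixes the matrix $B$ in \eqref{matrix_B}, namely $(\gamma_1,\gamma_2,\delta)=(1,0,1)$ for $A(H)$, $(s,r,-1)$ for $L(H)$, and $(2+s,r,1)$ for $Q(H)$. By Lemma~\ref{lemma1} every eigenvalue of $B$ is an eigenvalue of $C(H)$, the $B$-eigenvector $[1\;\;x]^{T}$ lifting to $[\mathbf{j}_r\;\;\sqrt{r/s}\,x\,\mathbf{j}_s]^{T}$. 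So it suffices to (i) compute the larger root $\theta_1$ of \eqref{marca1}, (ii) verify $\theta_1$ is the \emph{largest} eigenvalue, and (iii) translate the corresponding $B$-eigenvector through Lemma~\ref{lemma1}.

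For (i) I would substitute each triple into \eqref{marca1}. For $A(H)$ the radicand is $1+4rs=1+8m(n-2m)$, giving $\rho(A(H))=\tfrac12\bigl(1+\sqrt{1+8m(n-2m)}\bigr)$. For $L(H)$ the radicand collapses, since $(s-r)^2+4rs=(s+r)^2=n^2$, whence $\theta_1=\tfrac12(n+n)=n$ (and the other root is $0$). For $Q(H)$ the radicand is $(2+s-r)^2+4rs$, which I would expand and regroup via $s+r=n$ and $s-r=n-4m$ into $(n+2)^2-16m$, giving $\rho(Q(H))=\tfrac12\bigl(2+n+\sqrt{(n+2)^2-16m}\bigr)$. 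For (iii), the first coordinate of \eqref{xeq1} yields $x=(\theta_1-\gamma_1)/(\delta\sqrt{rs})$, so the lifted second block equals $\sqrt{r/s}\,x=(\theta_1-\gamma_1)/(\delta s)$; substituting the three values of $\gamma_1$ (and $\delta$) reproduces $\tfrac{\rho(A(H))-1}{n-2m}$, $-\tfrac{2m}{n-2m}$, and $\tfrac{\rho(Q(H))-(n-2m+2)}{n-2m}$, respectively. These are routine once $\theta_1$ is in hand.

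The genuine point is (ii): showing the eigenvalue produced by Lemma~\ref{lemma1} is the spectral radius and not merely some eigenvalue. Here I would split along the sign structure of the matrix. As a join of two nonempty graphs $H$ is connected, so $A(H)$ and $Q(H)$ are nonnegative and irreducible. For these the lifted eigenvector $[\mathbf{j}_r\;\;\tfrac{\theta_1-\gamma_1}{s}\mathbf{j}_s]^{T}$ is entrywise positive: one checks $\theta_1>\gamma_1$, which on rearranging \eqref{marca1} reduces to $\sqrt{(\gamma_1-\gamma_2)^2+4\delta^2rs}>\gamma_1-\gamma_2$ and hence to the strict inequality $4\delta^2rs>0$; Perron--Frobenius then forces $\theta_1$ to be the spectral radius. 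The Laplacian is the one case where this fails, because $L(H)$ has negative off-diagonal entries and is not nonnegative, so Perron--Frobenius does not apply directly; this is the main obstacle. There I would instead invoke the classical bound $\mu_1(H)\le n$, valid for every graph on $n$ vertices, together with the fact just established that $n\in\sigma_L(H)$, to conclude $\rho(L(H))=n$.

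Finally I would record that in each case the principal eigenvector is exactly the lift computed in (iii), and note that the leading-entry normalization used in Lemma~\ref{lemma1} is legitimate: since $B$ has off-diagonal entry $\delta\sqrt{rs}\neq0$ it is irreducible, so no eigenvector of $B$ has a vanishing first coordinate. The only step demanding real care is the Laplacian identification of the spectral radius, and the only mildly tedious computation is the reduction of the signless Laplacian radicand to $(n+2)^2-16m$.
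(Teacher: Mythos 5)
Your proposal is correct and follows essentially the same route as the paper's own proof: specialize Lemma~\ref{lemma1} with the triples $(\gamma_1,\gamma_2,\delta)=(1,0,1)$, $(s,r,-1)$, $(2+s,r,1)$, compute $\theta_1$ from \eqref{marca1}, lift the eigenvector, and identify $\theta_1$ as the spectral radius via Perron--Frobenius (positive eigenvector of an irreducible nonnegative matrix) for $A(H)$ and $Q(H)$ and via the bound $\mu_1\le n$ for $L(H)$. Your added remarks --- the uniform verification $\theta_1>\gamma_1$ and the justification that normalizing the $B$-eigenvector to have first coordinate $1$ is legitimate --- are correct minor refinements of the same argument.
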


\begin{proof}
As in  Lemma~\ref{lemma1}, consider $H=H_r \vee H_s$ with $H_r=mK_2$ and $H_s=H[S]$, where $S$ is an independent set of $H$ of size $n-2m$.
Then $r=2m$, $s=n-2m$ and we may analyze each of the following cases.
\begin{enumerate}
\item Assuming $\delta=1$, $C_1=A(H_{2m})$ and $C_2=A(H_{n-2m})$, then the matrix \eqref{matrix_B} becomes
      $$
      B = \left[\begin{array}{cc}
                 1               & \sqrt{2m(n-2m)}\\
                 \sqrt{2m(n-2m)} & 0               \\
          \end{array}\right].
      $$
      Therefore, according to \eqref{marca1}-\eqref{marca2},
      $$
      \sigma(B)=\{\frac{1 + \sqrt{1 + 8m(n-2m)}}{2}, \frac{1 - \sqrt{1 + 8m(n-2m)}}{2}\}.
      $$
      Let $\rho = \frac{1 + \sqrt{1 + 8m(n-2m)}}{2}$ and assume that $\mathbf{x} = \left[\begin{array}{c}
                                                                                                1 \\
                                                                                                x \\
                                                                                         \end{array}\right]$
      is an eigenvector of $B$ associated to $\rho$. From the eigenvalue equation
      $$
      B\mathbf{x} = \left[\begin{array}{cc}
                            1               & \sqrt{2m(n-2m)}\\
                            \sqrt{2m(n-2m)} & 0               \\
                           \end{array}\right]\left[\begin{array}{c}
                                                          1 \\
                                                          x \\
                                                   \end{array}\right] = \rho\left[\begin{array}{c}
                                                                                            1 \\
                                                                                            x \\
                                                                                     \end{array}\right],
      $$
      it follows that $1+x\sqrt{2m(n-2m)} = \rho \Leftrightarrow x = \frac{\rho-1}{\sqrt{2m(n-2m)}}$. Therefore, using Lemma~\ref{lemma1},
      $$
      \mathbf{u} = \left[\begin{array}{r}
                         \textbf{j}_{2m}\\
                         \sqrt{\frac{2m}{n-2m}}x\textbf{j}_{n-2m}\\
                         \end{array}\right] = \left[\begin{array}{r}
                                                    \textbf{j}_{2m}\\
                                                    \frac{\rho - 1}{n-2m}\textbf{j}_{n-2m}\\
                                                    \end{array}\right].
      $$
      is an eigenvector of $A(H)$ associated to the eigenvalue $\rho$. Since $\mathbf{u}>0$ and since $A(H)$ is an irreducible nonnegative
      matrix, we may conclude that $\rho=\rho(A(H))$, that is, $\rho$ is the spectral radius of $A(H)$.

\item Assuming $\delta=-1$,  $C_1=L(H_{2m})+(n-2m)I_{2m}$ and $C_2=L(H_{n-2m})+2mI_{n-2m}$, then the matrix $B$ in \eqref{matrix_B} becomes
      $$
      B = \left[\begin{array}{cc}
                 n-2m             & -\sqrt{2m(n-2m)}\\
                 -\sqrt{2m(n-2m)} & 2m               \\
          \end{array}\right].
      $$
      Therefore, according to \eqref{marca1}-\eqref{marca2}, $\sigma(B)=\{n, 0\}$. Let $\rho=n$ and let us assume that
      $\mathbf{x} = \left[\begin{array}{c}
                                 1 \\
                                 x \\
                          \end{array}\right]$ is an eigenvector of $B$ associated to $\rho=n$. From the eigenvalue equation
      $$
      B\mathbf{x} = \left[\begin{array}{cc}
                            n-2m            & -\sqrt{2m(n-2m)}\\
                            -\sqrt{2m(n-2m)} & 2m               \\
                           \end{array}\right]\left[\begin{array}{c}
                                                          1 \\
                                                          x \\
                                                   \end{array}\right] = n \left[\begin{array}{c}
                                                                                       1 \\
                                                                                       x \\
                                                                                \end{array}\right],
      $$
      it follows that $n - 2m - x\sqrt{2m(n-2m)} = n \Leftrightarrow x = \frac{-2m}{\sqrt{2m(n-2m)}}$. Therefore, using Lemma~\ref{lemma1},
      $$
      \mathbf{v} = \left[\begin{array}{c}
                         \textbf{j}_{2m}\\
                         \sqrt{\frac{2m}{n-2m}}x\textbf{j}_{n-2m}\\
                         \end{array}\right] = \left[\begin{array}{c}
                                                    \textbf{j}_{2m}\\
                                                    \frac{-2m}{n-2m}\textbf{j}_{n-2m}\\
                                                    \end{array}\right].
      $$
      is an eigenvector of $L(H)$ associated to $\rho$. Since the spectral radius of the Laplacian matrix of any graph does not exceed the order
      of the graph (see, for instance, \cite[Prop.7.1.1]{CRS10}), we may conclude that $\rho=\rho(L(H))$, that is, $\rho=n$ is the spectral radius
      of $L(H)$.
\item Assuming $\delta=1$, $C_1=Q(H_{2m})+(n-2m)I_{2m}$ and $C_2=Q(H_{n-2m})+2mI_{n-2m}$, then the matrix \eqref{matrix_B} becomes
      $$
      B = \left[\begin{array}{cc}
                 n - 2m + 2      & \sqrt{2m(n-2m)}\\
                 \sqrt{2m(n-2m)} & 2m               \\
          \end{array}\right].
      $$
      Therefore, according to \eqref{marca1}-\eqref{marca2}, $\sigma(B)=\{\frac{n+2+\sqrt{(n+2)^2-16m}}{2},\frac{n+2-\sqrt{(n+2)^2-16m}}{2}\}$.
      Let $\rho=\frac{n+2+\sqrt{(n+2)^2-16m}}{2}$ and let us assume that $\mathbf{x} = \left[\begin{array}{c}
                                                                                                    1 \\
                                                                                                    x \\
                                                                                             \end{array}\right]$ is an eigenvector of $B$ associated
      to $\rho$. From the eigenvalue equation
      $$
      B\mathbf{x} = \left[\begin{array}{cc}
                            n-2m+2          & \sqrt{2m(n-2m)}\\
                            \sqrt{2m(n-2m)} & 2m               \\
                           \end{array}\right]\left[\begin{array}{c}
                                                          1 \\
                                                          x \\
                                                   \end{array}\right] = \rho \left[\begin{array}{c}
                                                                                       1 \\
                                                                                       x \\
                                                                                \end{array}\right],
      $$
      it follows that $n - 2m +2 + x\sqrt{2m(n-2m)} = \rho \Leftrightarrow x = \frac{\rho-(n-2m+2)}{\sqrt{2m(n-2m)}}$. Therefore,
      using Lemma~\ref{lemma1},
      $$
      \mathbf{w} = \left[\begin{array}{c}
                         \textbf{j}_{2m}\\
                         \sqrt{\frac{2m}{n-2m}}\frac{\rho-(n-2m+2)}{\sqrt{2m(n-2m)}}\textbf{j}_{n-2m}\\
                         \end{array}\right] = \left[\begin{array}{c}
                                                    \textbf{j}_{2m}\\
                                                    \frac{\rho - (n-2m+2)}{n-2m}\textbf{j}_{n-2m}\\
                                                    \end{array}\right].
      $$
      is an eigenvector of $Q(H)$ associated to $\rho$. Since $\rho$ is the spectral radius of the real symmetric matrix $B$, it follows
      that $\rho > n - 2m + 2$, and then $\mathbf{w}>0$. From this and from the fact that $Q(G)$ is an irreducible nonnegative
      symmetric matrix, we may conclude that $\rho=\rho(Q(H))$, that is, $\rho$ is the spectral radius of $Q(H)$.
\end{enumerate}
\end{proof}

Applying Theorem~\ref{cor_dim_eigenvectors}, we may recognize in polynomial time if a graph has or not a complete DIM.
Notice that from the principal eigenvectors of the adjacency, Laplacian or signless Laplacian matrix it is possible to
identify the vertices belonging to the complete DIM (if there exists). Furthermore, since when an edge is deleted the
spectral radius decreases, we may conclude the following corollary.

\begin{corollary}
Let $G$ be a graph of order $n$ with a DIM, $M \subset E(G)$ such that $|M|=m$. Then the spectral radius of the adjacency and
signless Laplacian matrix of $G$ has the following upper bounds.
\begin{enumerate}
\item $\rho(A(G)) \le \frac{1 + \sqrt{1 + 8m(n-2m)}}{2}$;
\item $\rho(Q(G)) \le \frac{2+n+\sqrt{(2+n)^2-16m}}{2}$.
\end{enumerate}
\end{corollary}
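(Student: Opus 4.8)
The plan is to deduce both inequalities from Theorem~\ref{cor_dim_eigenvectors} by exhibiting, for an arbitrary graph $G$ with a DIM of size $m$, a graph $H$ with a \emph{complete} DIM of the same size on the same vertex set, of which $G$ is a spanning subgraph. Once this is in place, edge-deletion monotonicity of the spectral radius gives $\rho(A(G)) \le \rho(A(H))$ and $\rho(Q(G)) \le \rho(Q(H))$, and Theorem~\ref{cor_dim_eigenvectors} supplies the closed forms of $\rho(A(H))$ and $\rho(Q(H))$, which are precisely the claimed right-hand sides. This matches the heuristic ``deleting an edge decreases the spectral radius'' announced just before the statement.

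First I would pin down the structure of $G$. Writing $V(M)$ for the set of matched vertices and $S = V(G)\setminus V(M)$, I would verify that $G[V(M)] = mK_2$ and that $S$ is independent. Both follow directly from the DIM condition: an edge with both ends in $V(M)$ but not lying in $M$ would meet two distinct edges of $M$, while an edge with both ends in $S$ would meet no edge of $M$, each case contradicting that every edge of $G$ meets exactly one edge of $M$. Hence the only remaining edges of $G$ join $V(M)$ to $S$, so $G$ is a spanning subgraph of $H = H_r \vee H_s$ with $H_r = mK_2$ and $H_s$ the null graph on $s = n-2m$ vertices, namely the graph in which all edges between $V(M)$ and $S$ are present. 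This $H$ has a complete DIM of size $m$ and order $n$, so Theorem~\ref{cor_dim_eigenvectors} applies to it with the same parameters $m$ and $n$.

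The monotonicity step then closes the argument, and it is here that the only real point of care arises. For the adjacency matrix it is immediate: $E(G)\subseteq E(H)$ gives $0 \le A(G) \le A(H)$ entrywise, and the Perron--Frobenius monotonicity of the spectral radius of nonnegative matrices yields $\rho(A(G)) \le \rho(A(H))$, which by part~\ref{cor_adj} of Theorem~\ref{cor_dim_eigenvectors} equals $\frac{1 + \sqrt{1 + 8m(n-2m)}}{2}$. The subtlety is the signless Laplacian, because passing from $H$ to $G$ alters the diagonal degree part as well as the off-diagonal adjacency part, so an entrywise comparison is not obvious a priori. The resolution is that both parts move in the same direction: deleting edges can only decrease degrees, so $D(G) \le D(H)$ entrywise, while $A(G) \le A(H)$ entrywise, whence $Q(G) = D(G)+A(G) \le D(H)+A(H) = Q(H)$ entrywise, still with all entries nonnegative. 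Perron--Frobenius monotonicity then gives $\rho(Q(G)) \le \rho(Q(H))$, which by part~\ref{cor_slap} of Theorem~\ref{cor_dim_eigenvectors} equals $\frac{2+n+\sqrt{(2+n)^2-16m}}{2}$. Equivalently, one may avoid matrices altogether and use $x^{T}Q(G)x = \sum_{ij\in E(G)}(x_i+x_j)^2$, which is monotone in the edge set, so the Rayleigh-quotient characterisation of $\rho(Q)$ delivers the same bound. No Laplacian statement is included because $\rho(L(H)) = n$ already bounds $\rho(L)$ for every graph of order $n$, making such a bound vacuous.
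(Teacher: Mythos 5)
Your proposal is correct and follows essentially the same route as the paper: the paper justifies the corollary with the single remark that deleting edges decreases the spectral radius, i.e.\ $G$ is a spanning subgraph of the complete-DIM graph $K_{M,S}$ with the same $m$ and $n$, whose radii are given by Theorem~\ref{cor_dim_eigenvectors}. You merely make explicit the structural verification that $G[V(M)]=mK_2$ and $S$ is independent, and the entrywise (or Rayleigh-quotient) monotonicity needed for the signless Laplacian case, which is a worthwhile but not essentially different elaboration.
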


In this corollary, the Laplacian case is not considered, since for any graph the largest Laplacian eigenvalue is not greater than
the order of the graph.

\begin{theorem}\label{th_dim_eigenvalues}
Let $H$ be a graph of order $n$ with a complete DIM, $M \subset E(H)$ such that $|M|=m$. Then, the adjacency,
Laplacian and signless Lapacian spectra of $H$ are given by:
\begin{enumerate}
\item $\sigma_A(H)=\{\frac{1 \pm \sqrt{1+8m(n-2m)}}{2}, 1^{[m-1]}, 0^{[n-2m-1]}, (-1)^{[m]}\}$.\label{th_dim_1}
\item $\sigma_L(H)=\{n, (n-2m+2)^{[m]}, (n-2m)^{[m-1]}, (2m)^{[n-2m-1]}, 0\}$.\label{th_dim_2}
\item $\sigma_Q(H)=\{\frac{2+n \pm \sqrt{(2+n)^2-16m}}{2}, (n-2m+2)^{[m-1]}, (n-2m)^{[m]}, (2m)^{[n-2m-1]}\}$.\label{th_dim_3}
\end{enumerate}
\end{theorem}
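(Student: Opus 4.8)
The plan is to exploit the join structure $H = H_r \vee H_s$ with $H_r = mK_2$ and $H_s$ the null graph on $s = n-2m$ vertices, and to read off the entire spectrum of $C(H) \in \{A(H), L(H), Q(H)\}$ from the block form \eqref{general_matrix}, combining the spectra of the diagonal blocks $C_1$ and $C_2$ with the two eigenvalues supplied by the $2\times 2$ quotient matrix $B$. Since both diagonal blocks are (shifts of) the adjacency, Laplacian, or signless Laplacian matrices of $mK_2$ and of a null graph, all the required block spectra are elementary.

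First I would dispose of the two eigenvalues coming from the equitable partition $\{V(H_r), V(H_s)\}$. Both eigenvalues $\theta_1,\theta_2$ of $B$ in \eqref{marca1}--\eqref{marca2} were already computed, case by case, in the proof of Theorem~\ref{cor_dim_eigenvectors}; each has an eigenvector whose first coordinate can be normalized to $1$, so Lemma~\ref{lemma1} places both of them in $\sigma(C(H))$. These furnish the two ``$\pm$'' eigenvalues in each of the three items (and the pair $\{n,0\}$ in the Laplacian case).

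The remaining $n-2$ eigenvalues I would obtain from the observation that eigenvectors orthogonal to the all-ones vector on a part survive the join. Concretely, if $\mathbf{y}$ is an eigenvector of $C_1$ with $\mathbf{j}_r^T\mathbf{y}=0$, then $\delta\mathbf{j}_s\mathbf{j}_r^T\mathbf{y}=\mathbf{0}$, so $\bigl[\mathbf{y}^T,\ \mathbf{0}^T\bigr]^T$ is an eigenvector of $C(H)$ for the same eigenvalue; symmetrically for eigenvectors $\mathbf{z}$ of $C_2$ with $\mathbf{j}_s^T\mathbf{z}=0$. It therefore suffices to compute $\sigma(C_1)$ and $\sigma(C_2)$ together with, in each eigenspace, the dimension of the subspace orthogonal to the relevant all-ones vector. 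In the adjacency case $A(mK_2)$ has spectrum $1^{[m]},(-1)^{[m]}$: the $(-1)$-eigenspace is already orthogonal to $\mathbf{j}_r$ (giving $(-1)^{[m]}$), while $\mathbf{j}_r$ lies in the $1$-eigenspace, whose orthogonal complement is $(m-1)$-dimensional (giving $1^{[m-1]}$), and $A(H_s)=0$ contributes $0^{[s-1]}=0^{[n-2m-1]}$. For the Laplacian and signless Laplacian cases the diagonal blocks are shifted by $sI_r$ and $rI_s$ with $s=n-2m$ and $r=2m$, so the spectra $0^{[m]},2^{[m]}$ of $L(mK_2)$ and $2^{[m]},0^{[m]}$ of $Q(mK_2)$ are translated accordingly, producing the values $n-2m$ and $n-2m+2$, while each null block becomes $2mI_s$ and contributes $(2m)^{[n-2m-1]}$.

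The only delicate point is the multiplicity bookkeeping, which is also exactly what distinguishes items~\ref{th_dim_2} and~\ref{th_dim_3}. In each case $\mathbf{j}_r$ lies in precisely one eigenspace of $C_1$, and that eigenspace must shed one dimension so as not to double-count the quotient eigenvalue already furnished by $B$. For $A$ and for $L$ the vector $\mathbf{j}_r$ sits in the $1$- and the $0$-eigenspace of $mK_2$ respectively, so it is the value $1$ (resp.\ $n-2m$) that drops to multiplicity $m-1$; for $Q$ it sits in the $2$-eigenspace, so instead $n-2m+2$ drops to multiplicity $m-1$, which is precisely the interchange of multiplicities seen between the Laplacian and signless Laplacian spectra. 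A final count $2+(m-1)+m+(n-2m-1)=n$ confirms that the list is exhaustive, completing the proof of all three items.
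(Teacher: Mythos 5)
Your proof is correct. Every step checks out: the two ``quotient'' eigenvalues come from Lemma~\ref{lemma1}, the extension-by-zero of block eigenvectors orthogonal to the relevant all-ones vector is valid because the off-diagonal blocks of \eqref{general_matrix} are rank-one multiples of $\mathbf{j}_s\mathbf{j}_r^T$, the block spectra of $mK_2$ and the null graph (with their shifts $sI_r$, $rI_s$) are as you state, and the multiplicity drop occurs in exactly the eigenspace containing $\mathbf{j}_r$ (resp.\ $\mathbf{j}_s$), which correctly explains the $m\leftrightarrow m-1$ interchange between the Laplacian and signless Laplacian lists. The difference from the paper is one of self-containedness rather than substance: the paper's proof consists of invoking Theorems 3, 5 and 8 of \cite{cardoso_et_al2013} on spectra of joins of regular graphs and substituting $r=2m$, $s=n-2m$, whereas you re-derive that join-spectrum decomposition directly from the block form \eqref{general_matrix}, reusing Lemma~\ref{lemma1} for the two non-trivial eigenvalues. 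Your route buys independence from the external reference and makes transparent \emph{why} one copy of the degree eigenvalue is lost from each part; the paper's route is shorter and leans on a more general result. The only point you pass over lightly is that your final count of $n$ eigenvalues certifies completeness only because the constructed eigenvectors are linearly independent --- which they are, since the extended-by-zero vectors are orthogonal to each other and to the two eigenvectors of the form $\bigl[\mathbf{j}_r^T,\ c\,\mathbf{j}_s^T\bigr]^T$ produced by Lemma~\ref{lemma1}; one sentence to that effect would close the argument completely.
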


\begin{proof}
Taking into account that $H=H_r \vee H_s$, with $H_r=mK_2$ and $H_s=G[V(G) \setminus V(M)],$ where $r=2m$ and $s=n-2m$, we may apply the results
obtained in \cite{cardoso_et_al2013} as follows.
\begin{enumerate}
\item \textbf{The adjacency spectrum}: Applying Theorem 5 in \cite{cardoso_et_al2013}, it follows that
      $\sigma_A(H_r \vee H_s) = \sigma_A(H_r) \setminus \{1\} \cup \sigma_A(H_s)\setminus\{0\} \cup \sigma(\widetilde{C}),$ where
      $$\widetilde{C}=\left[\begin{array}{cc}
                            1         & \sqrt{rs} \\
                            \sqrt{rs} & 0%
              \end{array}
          \right].$$
      Therefore, $\sigma_A(H_r \vee H_s)=\{\frac{1+\sqrt{1+4rs}}{2}, 1^{[m-1]}, 0^{[n-2m-1]}, (-1)^{[m]}, \frac{1-\sqrt{1+4rs}}{2}\}$.
\item \textbf{The Laplacian spectrum}: Applying Theorem 8 in \cite{cardoso_et_al2013}, it follows that
      $\sigma_L(H_r \vee H_s)= (s+\sigma_L(H_r)\setminus \{0\}) \cup (r + \sigma_L(H_s)\setminus\{0\}) \cup \sigma(\widetilde{C}),$ where
      $$\widetilde{C}=\left[\begin{array}{cc}
                  s          & -\sqrt{rs} \\
                  -\sqrt{rs} & r%
                  \end{array}
                  \right].$$
      Therefore, $\sigma_L(H_r \vee H_s)=\{r+s, (s+2)^{[m]}, s^{[m-1]}, r^{[n-2m-1]}, 0\}$.
\item \textbf{The signless Laplacian spectrum}: Applying Theorem 3 in \cite{cardoso_et_al2013}, it follows that
      $\sigma_Q(H_r \vee H_s)= (s+\sigma_Q(H_r) \setminus \{2\}) \cup (r+\sigma_Q(H_s)\setminus\{0\}) \cup \sigma(\widetilde{C}),$ where
      $$\widetilde{C}=\left[\begin{array}{cc}
                             2+s         & \sqrt{rs} \\
                             \sqrt{rs}   & r%
              \end{array}
          \right].$$
      Therefore, $\sigma_Q(H_r \vee H_s)=\{\frac{2+r+s \pm \sqrt{(2+r+s)^2-8r}}{2}, (s+2)^{[m-1]}, s^{[m]}, r^{[n-2m-1]}\}$.
\end{enumerate}
\end{proof}

\begin{example}
let $H$ be a graph obtained from the graph depicted in Figure~\ref{figure1} adding the edges $18, 19, 29, 37, 39, 47, 49, 57, 67, 68$.
Then $H$ is a graph with a complete DIM, $M=\{12, 34, 56\}$. Applying Theorem~\ref{th_dim_eigenvalues}, we obtain
\begin{enumerate}
\item $\sigma_A(H) = \{4.772, 1^{[2]}, 0^{[2]}, -1^{[3]}, -3.772\}$;
\item $\sigma_L(H) = \{9, 6^{[2]}, 5^{[3]}, 3^{[2]}, 0\}$;
\item $\sigma_Q(H) = \{9.772, 6^{[2]}, 5^{[2]}, 3^{[3]}, 1.228\}$.
\end{enumerate}
Applying Theorem~\ref{cor_dim_eigenvectors}, we may conclude that the principal eigenvectors of $A(H)$, $L(H)$ and $Q(H)$ is
$\mathbf{u} = \left[\begin{array}{c}
                         \textbf{j}_{6}\\
                         1.2573\textbf{j}_{3}\\
                         \end{array}\right],$ $\mathbf{v} = \left[\begin{array}{c}
                                                                  \textbf{j}_{6}\\
                                                                  -2\textbf{j}_{3}\\
                                                                  \end{array}\right]$ and $\mathbf{w} = \left[\begin{array}{c}
                                                                                                              \textbf{j}_{6}\\
                                                                                                        1.5907\textbf{j}_{3}\\
                                                                                                        \end{array}\right]$, respectively.
\end{example}

\section{Lower and upper bounds on the size of a DIM, obtained from the adjacency, Laplacian and signless Laplacian spectra}

From now on, we denote a graph with a complete DIM, $M$, by $K_{M,S}$, where $M$ is a dominating induced matching,
$S$ is an independent set and each vertex of $S$ is connected by an edge to each vertex of $V(M)$.

\subsection{Bounds obtained from the adjacency spectra of graphs with a DIM}

\begin{lemma}
Let $G$ be a graph of order $n$ with a DIM $M \subseteq E(G)$. Then
\begin{equation}\label{inequality_1}
\left(\frac{n}{2}\right)^2 \ge \rho(\rho-1),
\end{equation}
where $\rho=\rho(G)$, with equality if and only if $n=4m$ and $G=K_{M,S}$.
\end{lemma}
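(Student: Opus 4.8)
The plan is to combine the adjacency upper bound of the preceding Corollary with a single elementary inequality. Since $G$ has a DIM $M$ with $|M|=m$, its vertex set partitions as $V(M)\cup S$ with $G[V(M)]=mK_2$, $S$ independent, and every non-matching edge joining $V(M)$ to $S$; hence $G$ is a spanning subgraph of $K_{M,S}$, and the adjacency part of the Corollary (equivalently the adjacency case of Theorem~\ref{cor_dim_eigenvectors}) gives
\[
\rho=\rho(A(G))\le \rho^\ast:=\frac{1+\sqrt{1+8m(n-2m)}}{2}.
\]

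First I would push this bound through the function $g(t)=t(t-1)$. A short computation, writing $D=1+8m(n-2m)$, gives $g(\rho^\ast)=\frac{(1+\sqrt{D})(\sqrt{D}-1)}{4}=\frac{D-1}{4}=2m(n-2m)$. Assuming $m\ge 1$ (the case $m=0$ being trivial), $G$ contains an edge so $\rho\ge 1$; since $g'(t)=2t-1>0$ for $t>\tfrac12$, the map $g$ is strictly increasing on $[1,\infty)$, and from $\rho\le\rho^\ast$ we obtain $\rho(\rho-1)\le 2m(n-2m)$, with equality exactly when $\rho=\rho^\ast$.

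Next I would record the purely algebraic fact $(n/2)^2-2m(n-2m)=\frac{n^2-8mn+16m^2}{4}=\frac{(n-4m)^2}{4}\ge 0$, so that $2m(n-2m)\le (n/2)^2$ with equality iff $n=4m$. Chaining the two steps yields $\rho(\rho-1)\le 2m(n-2m)\le (n/2)^2$, which is exactly \eqref{inequality_1}.

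Finally, for the equality characterization I would use that in this sandwich both inequalities must be tight simultaneously. Tightness of the second forces $n=4m$, while tightness of the first forces $\rho=\rho^\ast=\rho(A(K_{M,S}))$. Here lies the only delicate point: since $K_{M,S}$ is a join of two nonempty graphs it is connected, $G$ is a spanning subgraph of it, and the strict decrease of the Perron root under edge deletion (the monotonicity already invoked to obtain the Corollary) forces $G=K_{M,S}$. Conversely, if $n=4m$ and $G=K_{M,S}$ then $\rho=\rho^\ast$ and $2m(n-2m)=(n/2)^2$, so \eqref{inequality_1} holds with equality, completing the characterization.
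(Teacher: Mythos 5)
Your proof is correct and follows essentially the same route as the paper's: both compare $\rho(G)$ with $\rho(K_{M,S})$ through the increasing map $t\mapsto t(t-1)$ (the paper uses $f(x)=4x^2-4x$), evaluate it at $\rho(K_{M,S})$ to obtain $2m(n-2m)$, and finish with $(n-4m)^2\ge 0$. You are in fact more explicit than the paper on the equality characterization, which the paper dismisses as immediate.
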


\begin{proof}
Assuming that $|M|=m$ and $S=V(G) \setminus S$, since $f(x)=4x^2-4x$ is a strictly increasing function for $x>1/2$,
it follows that
\begin{equation}\label{inequality_2}
4(\rho^2-\rho) \le 4(\rho^2(K_{M,S})-\rho(K_{M,S})) = 8mn - 16m^2.
\end{equation}
Moreover, $(n-4m)^2 \ge 0 \Leftrightarrow 8mn - 16m^2 \le n^2$. Therefore, from
\eqref{inequality_2}, the inequality \eqref{inequality_1} follows. It is immediate that \eqref{inequality_1} holds
as equality if and only if $4m=n$ and
$G=K_{M,S}$
\end{proof}

From the above lemma, taking into account the item \ref{th_dim_1} of Theorem~\ref{th_dim_eigenvalues}, we are able
to obtain the following result.

\begin{theorem}\label{th_u_and_l_bound}
Let $G$ be a graph of order $n$ with a DIM $M \subset E(M)$ such that $|M|=m$ and let $\rho=\rho(G)$.
If $G \ne K_{M,S}$, then
$$
\lceil \frac{1}{4}\left(n-\sqrt{n^2-4(\rho^2-\rho)}\right)\rceil \le m \le \lfloor \frac{1}{4}\left(n+\sqrt{n^2-4(\rho^2-\rho)}\right)\rfloor.
$$
\end{theorem}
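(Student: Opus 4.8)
The plan is to reverse-engineer the inequality \eqref{inequality_1} from the previous lemma into a quadratic inequality in the unknown $m$, and then read off the feasible integer range for $m$. The starting point is the bound $\left(\frac{n}{2}\right)^2 \ge \rho(\rho-1)$ proved in the preceding lemma, but that bound alone only constrains $\rho$ in terms of $n$; to bound $m$ I must instead exploit the \emph{explicit} value of the spectral radius when $G=K_{M,S}$, namely $\rho(A(K_{M,S}))=\frac{1+\sqrt{1+8m(n-2m)}}{2}$ from item~\ref{th_dim_1} of Theorem~\ref{th_dim_eigenvalues}. The key algebraic identity I would establish first is that this value satisfies $\rho(K_{M,S})^2-\rho(K_{M,S}) = 2m(n-2m)$, which follows by squaring and simplifying the surd expression. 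This turns the complete-DIM case into a clean relation linking $\rho$ and $m$.

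Next I would use the comparison already embedded in \eqref{inequality_2}: for a general graph $G$ with a DIM, deleting the extra edges that would complete it to $K_{M,S}$ only decreases the index, so $\rho(G)\le \rho(K_{M,S})$, and hence $\rho^2-\rho \le \rho(K_{M,S})^2-\rho(K_{M,S})=2m(n-2m)$. Rearranging $\rho^2-\rho \le 2m(n-2m)=2mn-4m^2$ as a quadratic inequality in $m$ gives $4m^2-2mn+(\rho^2-\rho)\le 0$. The roots of the associated quadratic $4m^2-2nm+(\rho^2-\rho)=0$ are
\[
m_\pm=\frac{2n\pm\sqrt{4n^2-16(\rho^2-\rho)}}{8}=\frac{1}{4}\left(n\pm\sqrt{n^2-4(\rho^2-\rho)}\right),
\]
so the inequality forces $m_-\le m\le m_+$. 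Because $m$ is an integer, I may tighten these to $\lceil m_-\rceil \le m \le \lfloor m_+\rfloor$, which is exactly the claimed double bound.

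The one point that needs genuine care, rather than routine calculation, is justifying that the inequality is \emph{strict} enough to make the discriminant real and the bounds meaningful under the hypothesis $G\ne K_{M,S}$. I would argue that the lemma's equality analysis shows \eqref{inequality_1} is strict whenever $G\ne K_{M,S}$ (or $n\ne 4m$), so in particular $\rho^2-\rho<\left(\frac{n}{2}\right)^2$, whence $n^2-4(\rho^2-\rho)>0$ and the square root is a positive real; this is what guarantees the interval $[\,m_-,m_+\,]$ is nondegenerate and the floor/ceiling bounds are consistent. I expect the main obstacle to be precisely this bookkeeping around the edge-deletion monotonicity of the index (ensuring the comparison $\rho(G)\le\rho(K_{M,S})$ is applied to the correct supergraph on the same vertex set) together with confirming that the strictness passes cleanly from the lemma to the discriminant; the surd manipulation itself is mechanical once the identity $\rho^2-\rho=2m(n-2m)$ is in hand.
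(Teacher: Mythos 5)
Your proposal is correct and follows essentially the same route as the paper: both derive $\rho^2-\rho\le 2m(n-2m)$ by comparing $\rho(G)$ with the explicit index of the supergraph $K_{M,S}$, rearrange this into the quadratic $4m^2-2nm+(\rho^2-\rho)\le 0$ in $m$, and read off the roots (your justification that $G\ne K_{M,S}$ forces a positive discriminant, via the strictness in the preceding lemma, is the same fact the paper asserts). The only difference is cosmetic: you package the squaring step as the identity $\rho(K_{M,S})^2-\rho(K_{M,S})=2m(n-2m)$, whereas the paper leaves it as ``some algebraic steps.''
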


\begin{proof}
Since from Theorem~\ref{th_dim_eigenvalues}-\ref{th_dim_1}, $\rho(K_{M,S})=\frac{1}{2}(1+\sqrt{1+8m(n-2m)})$,
then $\rho(G) \le \frac{1}{2}(1+\sqrt{1+8m(n-2m)})$ and, setting $\rho=\rho(G)$, after some algebraic
steps we get
\begin{equation}
4m^2 - 2nm + \rho^2 - \rho \le 0.\label{cor_2_inequality}
\end{equation}
Let $q(m)=4m^2 - 2nm + \rho^2 - \rho$. Since $G \ne K_{M,S}$, then $n^2-4(\rho^2-\rho)>0$ and
therefore $q(m)=0$ has two real roots
\begin{eqnarray*}
m_1 &=& \frac{1}{8}\left(2n - \sqrt{4n^2 - 16(\rho^2-\rho)}\right)\\
m_2 &=& \frac{1}{8}\left(2n - \sqrt{4n^2 - 16(\rho^2-\rho)}\right).
\end{eqnarray*}
Hence, the inequality \eqref{cor_2_inequality} holds when $m_1 \le m \le m_2$.
\end{proof}

\begin{example}\label{ex_1}
Let us consider the graph $G$ depicted in Figure~\ref{figure1} which has order $n=9$ and minimum degree $\delta(G)=2$.
\begin{figure}[!h]
\begin{center}
\includegraphics[width=4cm]{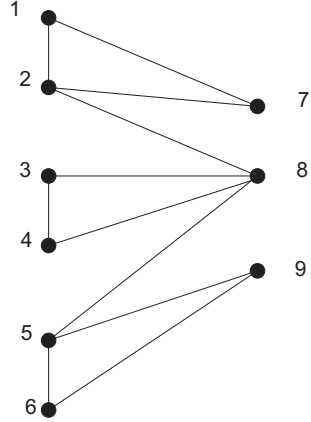}\\
\caption{A graph $G$ with a dominating induced matching $M=\{12, 34, 56\}$.}\label{figure1}
\end{center}
\end{figure}
Since the index of $G$ is $\rho(G)=2.6364$, then
$m_1=\frac{1}{4}\left(9-\sqrt{9^2-4(2.6364^2-2.6364)}\right)=0.254017$ and
$m_2=\frac{1}{4}\left(9+\sqrt{9^2-4(2.6364^2-2.6364)}\right)=4.245983$. Therefore, according to
Theorem~\ref{th_u_and_l_bound}, $1 \le m \le 4$.
\end{example}

Let $M$ be a DIM of $G$. Considering $|M|=m$ and labeling the
vertices of $V_1$ as $1, \ldots, 2m$, the adjacency matrix of $G$ is as follows:
\begin{eqnarray}
A\left( G\right) &=& \left[\begin{array}{cc}
                                P & R \\
                                R^{T} & 0%
                         \end{array}%
                   \right]. \label{adjacency_matrix}
\end{eqnarray}
One can see that $P^{2}=I$.

\begin{theorem}
Let $G$ be a graph of order $n,$ minimum degree $\delta=\delta(G)$, and index $\rho=\rho(G)$. If $G$ has
a dominating induced matching $M \subset E(G)$, then
\begin{equation}
|M| \ge \lceil \frac{n\left( 2\delta -\rho\right) }{2\left( 2\delta -1\right)} \rceil. \label{lb_on_dim}
\end{equation}
\end{theorem}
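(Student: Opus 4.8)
The plan is to combine the classical bound relating the index to the average degree with an edge count that is forced by the structure of a DIM. Write $|M|=m$ and partition $V(G)=V_1\cup S$, where $V_1=V(M)$ has $2m$ vertices and $S=V(G)\setminus V(M)$ is an independent set with $n-2m$ vertices. Because $V_1$ induces the matching $mK_2$ there are exactly $m$ edges inside $V_1$, and because $S$ is independent there are no edges inside $S$; hence $|E(G)|=m+e(V_1,S)$, where $e(V_1,S)$ denotes the number of edges joining $V_1$ to $S$. This is the same block picture as in \eqref{adjacency_matrix}, with the lower-right zero block recording the independence of $S$.

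The key structural estimate comes from the minimum-degree condition restricted to $S$. Every vertex $v\in S$ has all of its neighbors in $V_1$ (since $S$ is independent), so its degree equals the number of edges it sends to $V_1$; as $d(v)\ge\delta$, summing over $S$ gives $e(V_1,S)=\sum_{v\in S}d(v)\ge\delta(n-2m)$. Consequently $2|E(G)|=2m+2e(V_1,S)\ge 2m+2\delta(n-2m)$.

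Now I would invoke the standard Rayleigh-quotient bound $\rho\ge \frac{\mathbf{j}_n^{T}A(G)\mathbf{j}_n}{\mathbf{j}_n^{T}\mathbf{j}_n}=\frac{2|E(G)|}{n}$, which together with the previous estimate yields $\rho n\ge 2m+2\delta(n-2m)=2\delta n-2m(2\delta-1)$. Rearranging gives $2m(2\delta-1)\ge n(2\delta-\rho)$, and dividing by $2(2\delta-1)$—positive because $\delta\ge 1$ forces $2\delta-1>0$—produces $m\ge \frac{n(2\delta-\rho)}{2(2\delta-1)}$. Since $m$ is an integer, the right-hand side may be replaced by its ceiling, which is exactly \eqref{lb_on_dim}. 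The only point requiring care is the direction of the final division, and this is safe precisely because $\delta\ge 1$; I do not expect a genuine obstacle, the whole argument being a DIM-forced edge count fed into the average-degree bound, with tightness illustrated by $K_{M,S}$ when $n=4m$.
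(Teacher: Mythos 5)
Your argument is correct and is essentially the paper's own proof: the paper evaluates the Rayleigh quotient of $A(G)$ at the normalized all-ones vector in block form, and its computation $\rho \ge \frac{1}{n}\left(2m + 2\sum_{v\in S}d(v)\right) \ge \frac{1}{n}\left(2m + 2(n-2m)\delta\right)$ is exactly your average-degree bound $\rho \ge 2|E(G)|/n$ combined with the same edge count, followed by the same rearrangement. Your explicit remark that the final division requires $2\delta-1>0$ (i.e.\ $\delta \ge 1$) is a hypothesis the paper leaves implicit, so this is a point in your favor rather than a divergence.
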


\begin{proof}
Let us assume that the graph $G$ has a dominating induced matching $M \subset E(G)$ such that $|M|=m$
and thus $V(G)$ can be partitioned into the vertex subsets $V(M)$ and $S$, where $S$ is an independent
set. Then the adjacency matrix of $G$ can be written as in \eqref{adjacency_matrix} and it follows:
\begin{eqnarray*}
\rho & \ge &\frac{1}{\sqrt{n}}[\begin{array}{cc}
                                       \mathbf{j}_{2m}^{T} & \mathbf{j}_{n-2m}^{T}
                                       \end{array}\mathbf{]}\left[\begin{array}{cc}
                                                                         P     & R \\
                                                                         R^{T} & 0%
                                                                  \end{array}%
                                                            \right] \frac{1}{\sqrt{n}}\left[\begin{array}{c}
                                                                                            \mathbf{j}_{2m} \\
                                                                                            \mathbf{j}_{n-2m}
                                                                                            \end{array}%
                                                                                            \right]\\
             &  =  &\frac{1}{n}\left[\begin{array}{cc}
                                    \mathbf{j}_{2m}^{T}P+\mathbf{j}_{n-2m}^{T}R^{T} & \mathbf{j}_{2m}^{T}R
                                    \end{array}\right] \left[\begin{array}{c}
                                                             \mathbf{j}_{2m} \\
                                                             \mathbf{j}_{n-2m}%
                                                             \end{array}\right] \\
             &  =  &\frac{1}{n}\left(\mathbf{j}_{2m}^{T}P\mathbf{j}_{2m}+
                              \mathbf{j}_{n-2m}^{T}R^{T}\mathbf{j}_{2m}+\mathbf{j}_{2m}^{T}R\mathbf{j}_{n-2m}\right)\\
             &  =  &\frac{1}{n}\left( \mathbf{j}_{2m}^{T}P\mathbf{j}_{2m}+2\mathbf{j}_{n-2m}^{T}R^{T}\mathbf{j}_{2m}\right)\\
             &  =  &\frac{1}{n}\left( 2m+2\sum_{v \in S}d\left( v\right) \right)\\
             & \ge &\frac{1}{n}\left( 2m+2\left( n-2m\right) \delta \right).
\end{eqnarray*}
Therefore,
\begin{equation*}
\frac{n\left(\rho-2\delta \right) }{2\left(1-2\delta \right)}=
\frac{n\left(2\delta-\rho\right)}{2\left(2\delta-1\right)} \le m.
\end{equation*}
\end{proof}

\begin{example}\label{ex_2}
The graph $G$ depicted in Figure~\ref{figure1} is an example for which the lower bound \eqref{lb_on_dim} is tight.
In fact, since the graph $G$ has a dominating induced matching $M \subset E(G)$, and $n=9$, $\delta(G)=2$ and
$\rho(G)=2.6364$, it follows that
$\frac{n\left(2\delta(G) -\rho(G)\right) }{2\left(2\delta(G) -1\right) }= 2.0454$ and, therefore,
$\lceil \frac{n\left( 2\delta(G) - \rho(G)\right) }{2\left( 2\delta(G) -1\right)}\rceil =3 = |M|$.
\end{example}

Before introducing the next result, let us recall the following classical Cauchy interlacing theorem.

\begin{theorem}[Cauchy interlacing theorem \cite{CRS10}]\label{CauchyInterlacingTh}
Let
\[
A=\left[\begin{array}{cc}
         B & C^{\ast } \\
         C & D%
        \end{array}%
  \right]
\]%
be a $p \times p$ Hermitian matrix and $B$ a $q \times q$ matrix with $q<p.$ Then
\[
\lambda _{k}\left( A\right) \ge \lambda _{k}\left( B\right) \ge \lambda_{k+p-q}\left( A\right) \quad \hbox{for } k=1,2, \ldots, q.
\]
\end{theorem}

Now, applying this theorem to the adjacency matrix of a graph $G$ with an induced matching, we may conclude the following result.

\begin{theorem}
Let $G$ be a graph and let $M \subseteq E(G)$ be an induced matching such that $\left\vert M \right\vert = m.$
Then $\sigma(A(G))$ includes $m$ eigenvalues not greater than $-1$ and $m$ eigenvalues not less than $1.$
\end{theorem}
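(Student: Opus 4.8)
The plan is to exploit the structure of the adjacency matrix of a graph with an induced matching together with the Cauchy interlacing theorem just recalled. If $M$ is an induced matching with $|M|=m$, then the $2m$ vertices covered by $M$ induce exactly the graph $mK_2$ (a disjoint union of $m$ edges), precisely because no two edges of $M$ share a vertex and, being induced, there are no additional edges among these $2m$ vertices. Ordering the vertices so that the matched vertices come first, the adjacency matrix $A(G)$ has a leading principal submatrix $B = A(mK_2)$, which is the $2m \times 2m$ block sitting in the upper-left corner.

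Next I would compute the spectrum of $B$ directly. Since $mK_2$ is a disjoint union of $m$ copies of $K_2$, and $A(K_2) = \left[\begin{array}{cc} 0 & 1 \\ 1 & 0 \end{array}\right]$ has eigenvalues $1$ and $-1$, the matrix $B$ has eigenvalue $1$ with multiplicity $m$ and eigenvalue $-1$ with multiplicity $m$. Thus $\sigma(B) = \{1^{[m]}, (-1)^{[m]}\}$, so $\lambda_k(B) = 1$ for $k = 1, \ldots, m$ and $\lambda_k(B) = -1$ for $k = m+1, \ldots, 2m$.

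The final step is to apply Theorem~\ref{CauchyInterlacingTh} with $A = A(G)$ of order $p = n$ and $B$ of order $q = 2m$. For the $m$ largest eigenvalues of $B$, interlacing gives $\lambda_k(A(G)) \ge \lambda_k(B) = 1$ for $k = 1, \ldots, m$, which produces $m$ eigenvalues of $A(G)$ that are not less than $1$. For the lower end, interlacing gives $\lambda_k(B) \ge \lambda_{k+n-2m}(A(G))$; taking $k = m+1, \ldots, 2m$, the indices $k + n - 2m$ run over $n-m+1, \ldots, n$, and $\lambda_k(B) = -1$ there, so $\lambda_j(A(G)) \le -1$ for $j = n-m+1, \ldots, n$. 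This yields $m$ eigenvalues of $A(G)$ not greater than $-1$, completing the argument.

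The only real care needed is bookkeeping with the index shifts in the two interlacing inequalities, making sure the upper band of indices ($1,\dots,m$) feeds the lower bound $\ge 1$ and the lower band ($m+1,\dots,2m$) feeds, via the shift $+\,(n-2m)$, the upper bound $\le -1$; everything else is immediate once the submatrix $mK_2$ is identified. The one point that must not be glossed over is \emph{why} the induced submatrix on $V(M)$ is exactly $mK_2$: this is where the hypothesis that $M$ is \emph{induced} (not merely a matching) is essential, since it forbids edges between endpoints of distinct matching edges.
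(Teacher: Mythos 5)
Your proposal is correct and follows essentially the same route as the paper: identify the principal submatrix $A(mK_2)$ with spectrum $\{1^{[m]},(-1)^{[m]}\}$ induced by the matched vertices and apply the Cauchy interlacing theorem. The only cosmetic difference is that you run the interlacing inequalities over all indices $k=1,\dots,m$ and $k=m+1,\dots,2m$, whereas the paper uses only $k=m$ and $k=m+1$ and lets the monotone ordering of the eigenvalues do the rest.
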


\begin{proof}
Let $A\left( G\right) =\left[\begin{array}{cc}
                                        P & R \\
                                        R^{T} & 0%
                                 \end{array}\right].$ Since the order of $P$ is $2m$ and the order of $A\left( G\right) $ is $n,$
then $R$ is a $2m\times \left( n-2m\right)$ matrix. Applying Theorem~\ref{CauchyInterlacingTh} to $A\left(G\right)$, setting $k=m$
and $k=m+1,$ respectively, we obtain%
\begin{eqnarray*}
\lambda _{m}\left( A\left( G\right) \right) &\ge &\lambda_{m}\left(P\right) = 1 \\
                                         -1 & =  &\lambda_{m+1}\left( P\right)\\
                                            &\ge & \lambda_{m+1+n-2m}\left( A\left(G\right) \right) =\lambda_{n-m+1}\left(A\left( G\right)\right).
\end{eqnarray*}%
Then $A\left( G\right)$ has $m$ eigenvalues not greater than $-1$ and $m$ eigenvalues not less than $1.$
\end{proof}

As immediate consequence we have the following corollary.

\begin{corollary}\label{cor_1}
Let $G$ be a graph with spectrum (multiset of adjacency eigenvalues) $\sigma_A(G)$,
$\Lambda^{-}=\left\{ \lambda \in \sigma_A(G): \lambda \le -1\right\}$
and $\Lambda^{+}=\left\{ \lambda \in \sigma_AG): \lambda \ge 1\right\}.$ If $M \subseteq E(G)$ is an induced matching of
$G$, then
\begin{equation}
\left\vert M \right \vert \le \min \left\{ \left\vert \Lambda^{-}\right\vert,\left\vert \Lambda^{+}\right\vert \right\}.\label{im_up_bound}
\end{equation}
\end{corollary}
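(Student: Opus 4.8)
The plan is to read the conclusion off directly from the preceding theorem, so the only real work is bookkeeping. First I would set $m = \vert M \vert$, which is a well-defined nonnegative integer since $M$ is an induced matching. The preceding theorem guarantees that $\sigma_A(G)$ contains $m$ eigenvalues not greater than $-1$ and $m$ eigenvalues not less than $1$. By the very definitions $\Lambda^{-}=\{ \lambda \in \sigma_A(G): \lambda \le -1\}$ and $\Lambda^{+}=\{ \lambda \in \sigma_A(G): \lambda \ge 1\}$, the first collection of eigenvalues is contained in $\Lambda^{-}$ and the second in $\Lambda^{+}$, whence $\vert \Lambda^{-}\vert \ge m$ and $\vert \Lambda^{+}\vert \ge m$.

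Combining these two inequalities gives $m \le \min\{\vert \Lambda^{-}\vert, \vert \Lambda^{+}\vert\}$, and since $m = \vert M\vert$ this is exactly the bound \eqref{im_up_bound}. No further spectral machinery is required; the Cauchy interlacing argument already did all the heavy lifting inside the theorem.

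The only point that needs care — and the closest thing to an obstacle in an otherwise immediate deduction — is the handling of multiplicities. The theorem yields $m$ eigenvalues \emph{counted with multiplicity}, so $\Lambda^{-}$ and $\Lambda^{+}$ must correspondingly be interpreted as multisets, precisely as the statement already announces by calling $\sigma_A(G)$ the multiset of adjacency eigenvalues. With that convention fixed the containments above are legitimate as statements about cardinalities, and the corollary follows at once.
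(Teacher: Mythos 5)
Your proposal is correct and matches the paper exactly: the paper states the corollary as an ``immediate consequence'' of the preceding interlacing theorem, which is precisely the deduction you carry out, and your remark about counting with multiplicity is the right reading of $\sigma_A(G)$ as a multiset. Nothing further is needed.
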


\begin{example}\label{ex_3}
Considering the graph $G$ of the Example~\ref{ex_1} and taking into account that
$\sigma_A(G)=\{-2.0664, -1^{[4]},-0.2222, 1.6522, 2, 2.6364\}$, it follows that
$\Lambda^{-}=\{-2.0664,-1^{[4]}\}$ and $\Lambda^{+}=\{1.6522, 2, 2.6364\}$. Therefore,
according to Corollary~\ref{cor_1}, if $M \subset E(G)$ is an induced matching, then
$$
|M| \le 3.
$$
In this case, if $M$ is a dominating induced matching, combining \eqref{lb_on_dim} with \eqref{im_up_bound} we may conclude
that $|M|=3$.
\end{example}

\begin{example}\label{ex_3}
The graph $G$ depicted in Figure~\ref{figure2} is an example for which the upper bound \eqref{im_up_bound} is tight.
\begin{figure}[!h]
\begin{center}
\includegraphics[width=4cm]{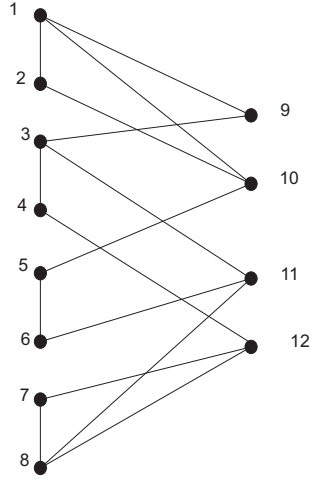}\\
\caption{A graph $G$ with a dominating induced matching $M=\{12, 34, 56, 78\}$.}\label{figure2}
\end{center}
\end{figure}
In fact, since the graph $G$ has an induced matching $M \subset E(G)$, and its spectrum $\sigma(G)$ is equal to
{\footnotesize $$
\{-2.156, -1.870,-1.597, -1.311, -0.897,-0.547, 0.034, 0.579, 1.386, 1.481, 2.308, 2.590\},
$$}
\noindent it follows that $|\Lambda^{-}|=4$ and $|\Lambda^{+}|=4$.
Therefore, $|M| \le 4$.
\end{example}

\subsection{Lower bounds obtained from the Laplacian spectra of graphs with a DIM}

Let us consider a graph $G$ of order $n$, with a DIM, $M \subset E(G)$, such that $|M|=m$ and thus $V(M)$ and
the independent set $S$ is a partition of $V(G)$. Let $D_1$ and $D_2$ be the diagonal matrices whose diagonal
entries are the degrees of the vertices in $V(M)$ and $S$, respectively. The Laplacian matrix of $G$ can be
written as
$$
L(G) = \left[\begin{array}{cc}
                    D_1 - P & -R \\
                    - R^T   &  D_2 \\
             \end{array}
       \right].
$$
Then, considering the largest eigenvalue $\mu_1$ of $L(G)$, we have
\begin{eqnarray*}
\mu_1 & \ge & \frac{1}{\sqrt{n}}\left[\textbf{j}^T_{2m} \; -\textbf{j}^T_{n-2m}\right]\left[\begin{array}{cc}
                                                                                 D_1 - P & -R \\
                                                                                  -R^T   &  D_2 \\
                                                                          \end{array}\right]\frac{1}{\sqrt{n}}\left[\begin{array}{c}
                                                                                                                    \textbf{j}_{2m} \\
                                                                                                                    -\textbf{j}_{n-2m} \\
                                                                                                             \end{array}\right]\\
      & = & \frac{1}{n}\left[\textbf{j}^T_{2m}D_1-\textbf{j}^T_{2m}P+\textbf{j}^T_{n-2m}R^T-\textbf{j}^T_{2m}R-\textbf{j}^T_{n-2m}D_2\right]\left[\begin{array}{c}
                                                                                                                                                  \textbf{j}_{2m}\\
                                                                                                                                                  -\textbf{j}_{n-2m}\\                                                                                                                                                      \end{array}\right]\ \\
      & = & \frac{1}{n} \left(\textbf{j}^T_{2m}D_1\textbf{j}_{2m}-\textbf{j}^T_{2m}P\textbf{j}_{2m}+\textbf{j}^T_{n-2m}R^T\textbf{j}^T_{2m}+\textbf{j}^T_{2m}R\textbf{j}^T_{n-2m}+\textbf{j}^T_{n-2m}D_2\textbf{j}^T_{n-2m}\right)\\
      & = & \frac{1}{n}\left(\textbf{j}^T_{2m}D_1\textbf{j}^T_{2m}+\textbf{j}^T_{n-2m}D_2\textbf{j}^T_{n-2m}-\textbf{j}^T_{2m}P\textbf{j}^T_{2m}+2\textbf{j}^T_{n-2m}R^T\textbf{j}^T_{2m}\right)\\
      & = & \frac{1}{n}\left(\text{tr}(L(G))-2m+2\sum_{v \in S}{d(v)}\right)\\
      &\ge& \frac{1}{n}\left(\text{tr}(L(G))-2m+2(n-2m)\delta(G)\right).
\end{eqnarray*}

Therefore, $m \ge \frac{\text{tr}(L(G))-n(\mu_1-2\delta(G))}{2(2\delta(G)+1)}$.

\subsection{Lower bounds obtained from the signless Laplacian spectra of graphs with a DIM}

Let us consider a graph $G$ of order $n$, with a DIM, $M \subset E(G)$, such that $|M|=m$. As in the previous subsection, let $D_1$ and $D_2$
be the diagonal matrices whose diagonal entries are the degrees of the vertices in $V(M)$ and $S=V(G)\setminus V(M)$,
respectively. Then, the signless Laplacian matrix of $G$ can be written as follows.
$$
Q(G) = \left[\begin{array}{cc}
                    D_1 + P & R \\
                    R^T     &  D_2 \\
             \end{array}\right] = \left[\begin{array}{cc}
                                         D_1 & 0 \\
                                         0   & D_2 \\
                                        \end{array}\right] + \left[\begin{array}{cc}
                                                                    P     & R \\
                                                                    R^T   & 0 \\
                                                                   \end{array}\right].
$$
Let $q_1$ be the largest eigenvalue of $Q(G)$. Then we have
\begin{eqnarray*}
q_1 & \ge & \frac{1}{\sqrt{n}}\left[\textbf{j}^T_{2m} \; \textbf{j}^T_{n-2m}\right]Q(G)\frac{1}{\sqrt{n}}\left[\begin{array}{c}
                                                                                                  \textbf{j}_{2m} \\
                                                                                                  \textbf{j}_{n-2m} \\
                                                                                                  \end{array}\right]\\
    &  =  & \frac{1}{n}\left(\sum_{v \in V(G)}{d(v)}+\textbf{j}^T_{2m}P\textbf{j}_{2m}+\textbf{j}^T_{n-2m}R^T\textbf{j}^T_{2m}+\textbf{j}^T_{2m}R\textbf{j}^T_{n-2m}\right)\\
    &  =  & \frac{1}{n}\left(\sum_{v \in V(G)}{d(v)}+\textbf{j}^T_{2m}P\textbf{j}_{2m}+2\textbf{j}^T_{2m}R\textbf{j}^T_{n-2m}\right).
\end{eqnarray*}
Therefore,
\begin{eqnarray*}
q_1 & \ge & \frac{1}{n}\left(\text{tr}(Q(G))+2m + 2 \sum_{v \in S}{d(v)}\right)\\
    & \ge & \frac{1}{n}\left(\text{tr}(Q(G))+2m+2(n-2m)\delta(G)\right)
\end{eqnarray*}
and
\begin{equation}
\frac{n(q_1-2\delta(G))-\text{tr(Q(G))}}{2(1-2\delta(G))}=\frac{\text{tr}(Q(G))-n(q_1-2\delta(G))}{2(2\delta(G)-1)} \le m.\label{q-index-lb}
\end{equation}

\begin{example}
Considering the graph of Figure~\ref{figure1}, since
$$
\sigma_Q(G)=\{1.0000^{[4]}, 1.5858, 2.2679,4.0000, 4.4142, 5.7321\},
$$
the upper bound \eqref{q-index-lb} produces
$$
\frac{\text{tr}(Q(G))-n(q_1-2\delta(G))}{2(2\delta(G)-1)}=\frac{22-9(5.7321-4)}{2(4-1)} = 1.0685
$$
and thus $m \ge 2$.
\end{example}

{\bf Acknowledgements}.
Domingos M. Cardoso and Enide A. Martins partially supported by {\it FEDER} funds through {\it COMPETE}--Operational
Programme Factors of Competitiveness and by Portuguese funds through the {\it Center for Research and
Development in Mathematics and Applications} (University of Aveiro) and the Portuguese Foundation for Science
and Technology (``FCT--Funda\c{c}\~{a}o para a Ci\^{e}ncia e a Tecnologia''), within project PEst-C/MAT/UI4106/2011
with COMPETE number FCOMP-01-0124-FEDER-022690 and also to the Project PTDC/MAT/112276/2009. Oscar Rojo thanks the
support of Project Fondecyt Regular 1130135. Luis Medina and Oscar Rojo thanks the hospitality of the Mathematics
Department of Aveiro University, Aveiro, Portugal.\\

%The authors thank to an anonymous referee for his/her suggestions which contributed to improve the presentation
%of the paper.

%\begin{thebibliography}{00}


\begin{thebibliography}{9}

%\bibitem{abreu_et_al2012}
%N. M. Abreu, D. M. Cardoso, E. A. Martins, M. Robbiano, B. San Martin, On the Laplacian and signless Laplacian
%spectrum of a graph with k pairwise co-neighbor vertices. \emph{Linear Algebra Appl.} 437 (2012), 2308-2316.

\bibitem{BHN10}
A. Brandst\"adt, C. Hundt, R. Nevries, Efficient edge domination on hole-free graphs in polynomial time.
LATIN 2010: theoretical informatics, 650–-661, \emph{Lecture Notes in Comput. Sci.}, 6034, Springer, Berlin, 2010.

\bibitem{BM11}
A. Brandst\"adt, R. Mosca, Dominating Induced Matchings for $P_7$-Free Graphs in Linear Time, \emph{Algorithmica}, in press.

\bibitem{BLR12}
A. Brandst\"adt, B. Leitert, D. Rautenbach, Efficient Dominating and Edge Dominating Sets for Graphs and Hypergraphs,
\emph{Lecture Notes in Comput. Sci.}, 7676 (2012), 267--277

\bibitem{CCDS08}
D. M. Cardoso, J. O. Cerdeira, C. Delorme, P. C. Silva, Efficient edge domination in regular graphs,
\emph{Discrete Applied Mathematics} {\bf 156} (2008): 3060--3065.

\bibitem{CL09}
D. M. Cardoso, V. V. Lozin, Dominating induced matchings. in Lipshteyn, Marina (ed.) et al., Graph theory,
computational intelligence and thought. Essays dedicated to Martin Charles Golumbic on the occasion of his
60th birthday. Berlin: Springer. \emph{Lecture Notes in Comput. Sci.} 5420 (2009), 77--86.

\bibitem{CKL11}
D. M. Cardoso, N. Korpelainen, V. V. Lozin, On the complexity of the dominating induced matching problem in
herditary classes of graphs, \emph{Discrete Appl. Math.} 159 (2011), 521--531.

\bibitem{cardoso_et_al2013}
D. M. Cardoso, M. A. A. de Freitas, E. A. Martins, M. Robbiano, Spectra of graphs obtained by a generalization
of the join graph operation, \emph{Discrete Mathematics} 313 (2013), 733--741.

%\bibitem{SC}
%S. M. Cioab\u{a}, A necessary and sufficient eigenvector condition for a connected graph to be bipartite, Electron.
%J. Linear Algebra 20 (2010), 351--353.

\bibitem{CRS10}
D. Cvetkovi\'c, P. Rowlinson, S. Simi\'c, An Introduction to the Theory of Graph Spectra, Cambridge University Press
2010, Cambridge.

%\bibitem{Fiedler}
%M. Fiedler, Eigenvalues of nonnegative symmetric matrices, Lin. Algebra Appl. 9 (1974): 119--142.

\bibitem{K09}
N. Korpelainen, A polynomial-time algorithm for the dominating induced matching problem in the class of convex
graphs, \emph{Electron. Notes Discrete Math.} 32 (2009) 133--140.

\end{thebibliography}
\end{document}